\newtheorem{thm}{Theorem}
\newtheorem{lem}{Lemma}[section]
\newtheorem{conj}{Conjecture}[section]
\newtheorem{quest}{Question}[section]
\theoremstyle{definition}
\newtheorem{defn}[lem]{Definition}
\newtheorem{exmp}{Example}[section]
\theoremstyle{remark}
\newtheorem{rem}{Remark}[section]
\numberwithin{equation}{section}
\pgfplotsset{compat=1.18}
\title{A new bound for the Fourier-Entropy-Influence conjecture}
\author{Xiao Han}
\date{}
\begin{document}

\maketitle
\begin{abstract}
In this paper, we prove that the Fourier entropy of an $n$-dimensional boolean function $f$ can be upper-bounded by $O(I(f)+ \sum\limits_{k\in[n]}I_k(f)\log \frac{1}{I_k(f)})$, where $I(f)$ is its total influence and $I_k(f)$ is the influence of the $k$-th coordinate. There's no strict quantitative relationship between our bound with the known bounds for the Fourier-Min-Entropy-Influence conjecture $O(I(f)\log I(f))$ and $O(I(f)^2)$. The proof is elementary and uses iterative bounds on moments of Fourier coefficients over different levels to estimate the Fourier entropy as its derivative.
\\
\end{abstract}
\section{Introduction}
One of the central notions in the study of boolean functions, i.e. functions 
$$f:\{-1,1\}^{n} \to \{-1,1\} , n\in \mathbb{N}^*$$
is their influence, that describes the stability of the function with respect to bit flips on the hypercube. More precisely, for a boolean function $f$, the influence of the $k$th coordinate $I_k(f)$ is given by the probability that $f(x)\neq f(\mu_k(x))$, where $x$ is uniformly distributed on the hypercube $\{-1,1\}^n$ and $\mu_k:\{-1,1\}^n \to \{-1,1\}^n$ is defined by flipping the $k$th coordinate $$\mu_k(x_1, x_2, \dots, x_n):=(x_1, \dots, -x_k, \dots, x_n).$$
The total influence is defined by $I(f):=\sum\limits_{k=1}^{n}I_k(f)$.

One is often interested in the low-influence situation and in describing what constraints this low influence puts on the underlying boolean function \cite{6, 15, 16}. Fourier analysis is one possible tool for describing these constraints. One can ask: how concentrated the Fourier spectrum of a low-influence boolean function is? In other words, given the influence $I$, we can try to calculate the number $N(I)$ of Fourier coefficients that carry `almost' all of the Fourier mass. This problem not only provides stronger isoperimetric inequalities for low-influence boolean functions but also helps us understand the structure of their Fourier spectrum better. Friedgut's junta theorem leads to the concentration of the form $N(I)=\exp(O(I^2))$ \cite{16}. 

In this paper, we study the Fourier-Entropy-Influence conjecture, which - if true - would imply $N(I)=\exp(O(I))$. Originally proposed by Friedgut and Kalai in the 1990s in the context of monotone graph properties \cite{10}, the conjecture plays a crucial role in the study of sharp thresholds for graph properties, where these properties are viewed as Boolean functions. For instance, proving this conjecture would strengthen the fundamental results regarding influences and threshold intervals of graph properties, as established in the Bourgain-Kalai work \cite{18}. The conjecture also has significant implications in learning theory, as it directly implies Mansour's conjecture \cite{19} in 1995, which posits that the Fourier coefficients of a Boolean function described by a DNF formula with $m$ terms are concentrated on only a polynomial number of coefficients. This, in turn, would lead to the development of efficient learning algorithms in the agnostic model, addressing a central problem in computational learning theory \cite{20}. Moreover, this conjecture extends its relevance to percolation theory \cite{4,5} and information theory \cite{21}. However, proving the conjecture has proven to be exceptionally challenging, and despite extensive work in this area, progress toward its resolution has been minimal.

\subsection{Fourier-Entropy-Influence Conjecture}
We first give a brief introduction to the Fourier analysis on the hypercube. As before let $x=(x_1,\dots,x_n)$ be a uniform random variable that takes value in $\{-1,1\}^n$ in a probability space $(\Omega, \mathcal{F}, P)$. We endow the function space $\Omega_n^*=\{f:\{-1,1\}^n \to \mathbb{R}\}$ with an inner product: for $f_1, f_2 \in \Omega_n^*$, $\langle f_1, f_2 \rangle :=\mathbb{E}f_1(x)f_2(x)$. Note that for $S\subset [n]$, the polynomial $X_S(x):=\prod\limits_{k\in S}x_k$ could be viewed as a function from $\{-1,1\}^n$ to $\mathbb{R}$ and one can check that $\{X_S\}_{S\subset [n]}$ is a normalized orthogonal basis of $\Omega_n^*$. For any $f\in \Omega_n^*$, we thus get the Fourier-Walsh expansion: $f=\sum\limits_{S\in[n]} \hat{f}(S)X_S$, where $\hat{f}(S)=\mathbb{E}f(x)X_S(x)$ are the Fourier coefficients of $f$. When $f$ is a boolean function, we actually have $\sum\limits_{S\subset[n]}\hat{f}(S)^2=1$ and $\sum\limits_{S\subset[n]}|S|\hat{f}(S)^2=I(f)$.

For a boolean function $f:\{-1, 1\}^n \to \{-1, 1\}$, the Fourier-Entropy-Influence conjecture asks if the entropy of the Fourier spectrum 
\begin{equation}
Ent(f):=\sum\limits_{S\subset [n]}\hat{f}(S)^2\log_2 \frac{1}{\hat{f}(S)^2}
\end{equation}
can be upper-bounded by constant times of the total influence:
\begin{conj}[FEI]
There exists a constant $c>0$, such that for any boolean function $f:\{-1,1\}^n \to \{-1,1\}$, we have $Ent(f)<cI(f)$.
\end{conj}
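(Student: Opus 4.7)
The approach is to decompose $Ent(f)$ by Fourier levels and attack each level via a hypercontractive moment bound, then iterate to eliminate the residual. Set $W_k(f) := \sum_{|S|=k}\hat{f}(S)^2$ and $Ent_k(f) := \sum_{|S|=k}\hat{f}(S)^2\log_2(1/\hat{f}(S)^2)$, so that $Ent(f) = \sum_k Ent_k(f)$, $\sum_k W_k(f)=1$, and $I(f) = \sum_k k\,W_k(f)$. Proving the level-wise inequality $Ent_k(f) \le C\,k\,W_k(f)$ with an absolute constant $C$ would yield FEI directly upon summation, so this is the target.

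The first step uses Bonami--Beckner hypercontractivity applied to the level-$k$ projection $f^{=k}$: one has $\|f^{=k}\|_4 \le 3^{k/2}\|f^{=k}\|_2$, equivalently $\sum_{|S|=k}\hat{f}(S)^4 \le 9^{k}W_k(f)^2$. Converting this fourth-moment estimate into an entropy bound for the probability measure $S\mapsto \hat{f}(S)^2/W_k(f)$ on level $k$ yields $Ent_k(f) \le C_1\,k\,W_k(f) + W_k(f)\log_2(1/W_k(f))$, and summation gives $Ent(f) \le C_1 I(f) + \sum_k W_k(f)\log_2(1/W_k(f))$. Thus the problem reduces to absorbing the residual $\sum_k W_k(f)\log_2(1/W_k(f))$ into $O(I(f))$. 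For this I would iterate the moment argument along pivotal coordinates: choosing the $j$ with largest $I_j(f)$, decompose $f = \mathbb{E}_{x_j}f + x_j\,\partial_j f$, use the identity relating $Ent(f)$ to $Ent(\partial_j f)$, $Ent(\mathbb{E}_{x_j}f)$ and a binary cross-term of size at most $I_j(f)\log_2(1/I_j(f))$, and induct on the pair $(n, I(f))$ with both components strictly decreasing.

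\textbf{The main obstacle}, and the reason FEI has been open since Friedgut--Kalai, is precisely this last telescoping. The naive execution -- essentially what the present paper performs -- produces only the weaker bound $O(I(f) + \sum_j I_j(f)\log_2(1/I_j(f)))$, because the per-step increment $I_j\log_2(1/I_j)$ fails to telescope when the individual $I_j$'s are small and numerous: in that regime $\sum_j I_j(f)\log_2(1/I_j(f))$ can exceed $I(f)$ by a factor of $\log_2(1/\min_j I_j)$. Closing the gap appears to require a genuinely new ingredient -- either a sharpened level-$k$ inequality of ``reverse log-Sobolev'' type that removes the residual $W_k\log_2(1/W_k)$ term directly, or a robust junta-approximation theorem showing that $Ent(f) \gg I(f)$ forces $f$ to concentrate on an $O(I(f))$-sized set of coordinates carrying total influence $\Omega(I(f))$. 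Neither ingredient is presently available, which is why the elementary moment iteration in this paper settles for the intermediate bound $O(I(f) + \sum_j I_j(f)\log_2(1/I_j(f)))$ rather than the full conjecture; any route past this step would almost certainly have to exploit the global boolean constraint $f^2\equiv 1$ in a way that level hypercontractivity alone -- tight on the dictator $x\mapsto x_k$ -- does not.
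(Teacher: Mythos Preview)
The statement is the Fourier--Entropy--Influence \emph{conjecture}; the paper does not prove it, and neither does your proposal. You acknowledge this yourself in the second half, so on that level your self-assessment is accurate. But the first half of the proposal contains two genuine errors that should not stand unflagged.

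First, the identity you use is wrong: $\|f^{=k}\|_4^4 = \mathbb{E}\bigl[(\sum_{|S|=k}\hat f(S)X_S)^4\bigr]$ is \emph{not} equal to $\sum_{|S|=k}\hat f(S)^4$; expanding picks up all quadruples $(S_1,S_2,S_3,S_4)$ with $S_1\triangle S_2\triangle S_3\triangle S_4=\emptyset$, not just the diagonal. In fact $\sum_{|S|=k}\hat f(S)^4 \le \bigl(\sum_{|S|=k}\hat f(S)^2\bigr)^2 = W_k(f)^2$ holds trivially, so the hypercontractive factor $9^k$ you introduce is vacuous here.

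Second, and more seriously, the level-wise inequality you target, $Ent_k(f)\le C_1\,k\,W_k(f)+W_k(f)\log_2(1/W_k(f))$, is false for boolean functions. It is equivalent to saying the normalized level-$k$ measure $p_S=\hat f(S)^2/W_k(f)$ has Shannon entropy at most $C_1 k$. Take $f=\mathrm{Maj}_n$ at level $k=1$: each $\hat f(\{i\})^2$ is the same, so $p_{\{i\}}=1/n$ and the entropy of this measure is $\log_2 n$, which is unbounded while $k=1$. So the reduction ``FEI $\Leftarrow$ bound $\sum_k W_k\log_2(1/W_k)$ by $O(I(f))$'' already fails at the first step.

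Finally, your description of what ``the present paper performs'' is off. The paper does \emph{not} decompose by level or invoke hypercontractivity; it explicitly notes that hypercontractivity struggles with high-degree coefficients. Instead it tracks the restricted moment $M_{V,\epsilon}(f)=\mathbb{E}_x\sum_{S\subseteq V}|\widehat{f_{V^c\to x}}(S)|^{2(1+\epsilon)}$ as $V$ grows one coordinate at a time, bounds the increment $M_{V\cup\{k\},\epsilon}-M_{V,\epsilon}$ via an elementary binomial inequality (Lemma~\ref{L24}), telescopes from $V=\emptyset$ to $V=[n]$, and differentiates at $\epsilon=0$. The per-step loss is indeed of order $I_k\log(1/I_k)$, which is where the weaker bound $O\bigl(I(f)+\sum_k I_k(f)\log(1/I_k(f))\bigr)$ comes from --- but the mechanism is a moment iteration over \emph{coordinates}, not over \emph{levels}.
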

Note that if it is true we will have for any $\delta>0$, 
\begin{equation}
    \sum_{\hat{f}(S)^2\leq 2^{-\frac{cI(f)}{\delta}}}\hat{f}(S)^2\leq \delta.
\end{equation}
On the other hand $|\{S:\hat{f}(S)^2> 2^{-\frac{cI(f)}{\delta}}\}|\leq 2^{\frac{cI(f)}{\delta}}$, which means that the Fourier weights are concentrated on $2^{\frac{cI(f)}{\delta}}$ coefficients except a constant of $\delta$.

A weaker conjecture is the so-called Fourier-Min-Entropy-Influence conjecture which asks if the min-entropy $\min\limits_{S\subset [n]} \log_2{\hat{f}^2(S)}$ (note that this is not larger than $Ent(f)$ since $\sum\limits_{S\subset [n]} \hat{f}^2(S)=1$) could be bounded by constant times of $I(f)$:
\begin{conj}[FMEI]
There exists a constant $c>0$, such that for any boolean function $f:\{-1,1\}^n \to \{-1,1\}$, we have $\min\limits_{S\subset [n]} \log_2{\hat{f}^2(S)}<cI(f)$.
\end{conj}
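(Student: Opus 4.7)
Since the Shannon entropy always upper-bounds the min-entropy, the probability distribution $\{\hat{f}(S)^2\}_S$ on subsets of $[n]$ satisfies $\min_S \log_2(1/\hat{f}(S)^2) \leq Ent(f)$, which is the inequality parenthetically noted right above Conjecture 1.2. Consequently any $O(I(f))$ upper bound on $Ent(f)$ would imply FMEI outright. My plan is therefore to combine this trivial reduction with the paper's main theorem, which from the abstract already yields $Ent(f) = O(I(f) + \sum_{k \in [n]} I_k(f)\log(1/I_k(f)))$; the remaining task is to absorb the extra term $\sum_k I_k(f)\log(1/I_k(f))$ into $O(I(f))$.

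A first observation reduces the problem: whenever all individual influences are bounded below by a constant $c_0$, one has $\log(1/I_k(f)) = O(1)$ and therefore the extra term is $O(I(f))$, so FMEI follows immediately. The genuinely hard case is when many coordinates have tiny individual influences, where $\sum_k I_k(f)\log(1/I_k(f))$ can be as large as $I(f)\log(n/I(f))$ (the extremal configuration being $I_k(f) = I(f)/n$ for all $k$). To handle that regime I would pick a threshold $\tau$ and split coordinates into \emph{heavy} ones with $I_k(f) \geq \tau$, whose contribution to the extra term is $O(I(f)\log(1/\tau))$, and \emph{light} ones with $I_k(f) < \tau$. For the light block, apply a random restriction fixing those coordinates; the restricted function $f'$ should with positive probability satisfy $I(f') = O(I(f))$ and possess no tiny influences, so the easy case applies to $f'$. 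A large Fourier coefficient of $f'$ then pulls back to a large Fourier coefficient of $f$ via a second-moment comparison of $\hat{f}(S)^2$ with its conditional expectation under the restriction.

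The main obstacle is exactly the restriction step: standard random restrictions typically lose a logarithmic factor in either the influence budget or the Fourier mass at each application, and this loss is precisely what we are trying to avoid. This is the same barrier that blocks a full proof of FEI in the current literature, so I would expect the attack to stall here unless one can invoke a sharper hypercontractive or level-$k$ inequality tailored to the min-entropy question. As a fallback unconditional statement, the reduction in the first paragraph already delivers $\min_S \log_2(1/\hat{f}(S)^2) = O(I(f)\log(n/I(f)))$, which is a meaningful partial result toward FMEI.
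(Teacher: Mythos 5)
The statement you were asked to prove is labeled \textbf{Conjecture} in the paper, not a theorem: the paper does not contain a proof of FMEI, and no proof of it is known. So there is no ``paper's own proof'' to measure your attempt against. The honest outcome here is that you should not be able to produce a complete proof, and indeed you explicitly acknowledge that your random-restriction step stalls. That self-assessment is accurate.

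What you \emph{do} correctly reproduce is the paper's unconditional corollary for the min-entropy. Your first-paragraph reduction (min-entropy $\leq$ Shannon entropy, hence $\min_S \log_2(1/\hat f(S)^2) \leq Ent(f)$) is exactly how the paper passes from Theorem~\ref{T1} to the remark that the min-entropy is at most $O\bigl(I(f) + \sum_{k} I_k(f)\log\tfrac{1}{I_k(f)}\bigr)$, and your fallback $O\bigl(I(f)\log\tfrac{n}{I(f)}\bigr)$ follows from the Jensen observation the paper records in Section~1.2 that $\sum_k I_k(f)\log\tfrac{1}{I_k(f)} \leq I(f)\log\tfrac{n}{I(f)}$. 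Your proposed splitting into heavy and light coordinates with a random restriction on the light block is a reasonable heuristic, but it is precisely where the known obstruction lies: restricting away light coordinates does not a priori keep the restricted function's influences bounded below while preserving an $O(I(f))$ total-influence budget, and the ``pull back a large Fourier coefficient'' step requires a second-moment/anticoncentration control that is itself essentially equivalent to the open problem. So the gap you name is a genuine gap, and the correct conclusion is that beyond the unconditional $O\bigl(I(f)+\sum_k I_k(f)\log\tfrac{1}{I_k(f)}\bigr)$ bound, FMEI remains open both in this paper and in general.
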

The FMEI conjecture equivalently asks how large the maximum of the Fourier coefficients is. Even this conjecture is hard to resolve.

One remarkable work on these conjectures is the recent paper \cite{1}, where the authors show that for boolean functions of constant variance, the min-entropy is at most $O(I(f)\log I(f))$ and the spectrum is concentrated on $2^{O(f)\log I(f)}$ coefficients. They also give a similar bound for the low-degree part of the Fourier entropy. Other works on the conjectures are mostly for specific classes of boolean functions \cite{7,8,12,13,14,17}. 

It is not easy to have non-trivial bounds for $Ent(f)$ that work for all boolean functions; this is possibly due to the fact that it's hard to control the high-degree part of the Fourier coefficients by `Hypercontractivity', the main tool in the study of low-influence boolean functions. In this paper, we try to avoid using `Hypercontractivity' and estimate $Ent(f)$ as constant times of the derivative of $\sum\limits_{S\in[n]} |\hat{f}(S)|^{2(1+\epsilon)}$ with respect to $\epsilon$ when $\epsilon \to 0$.
\subsection{Our Results}

Our results give a new bound for the Fourier entropy $Ent(f)$. We propose a new approach related to the moments of Fourier coefficients to show that
\begin{thm}
\label{T1}
There exist $c_1, c_2>0$ such that for any boolean function $f:\{-1,1\}^n\to \{-1,1\}$ we have
\begin{equation}
Ent(f)< c_1 I(f)+ c_2 \sum_{k\in[n]}I_k(f)\log \frac{1}{I_k(f)}.
\end{equation}
Here we assume that $I_k(f)\log \frac{1}{I_k(f)}=0$ for $I_k(f)=0$.
\end{thm}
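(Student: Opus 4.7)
My plan is to bound $Ent(f)$ level by level, attacking each level with a hypercontractive moment inequality. Write
\[
  Ent(f)=\sum_{d=0}^n H_d,\qquad H_d:=\sum_{|S|=d}\hat f(S)^2\log\frac{1}{\hat f(S)^2},
\]
and $W_d:=\sum_{|S|=d}\hat f(S)^2$, so $\sum_d W_d=1$ and $\sum_d dW_d=I(f)$. Splitting $H_d=W_d\log(1/W_d)+W_dH_d^{\ast}$, where $H_d^{\ast}$ is the Shannon entropy of the probability measure $\hat f(S)^2/W_d$ on $\binom{[n]}{d}$, the envelope sum $\sum_d W_d\log(1/W_d)$ is controlled by $O(\log(2+I(f)))$ (maximum entropy of a distribution on $\{0,1,2,\dots\}$ with mean $I(f)$). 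The real work is in $\sum_d W_dH_d^{\ast}$: since the trivial bound $H_d^{\ast}\le\log\binom{n}{d}$ only yields $O(I(f)\log n)$, one must exploit that the level-$d$ Fourier mass cannot spread arbitrarily when the individual coordinate influences are small.

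The structural input I would use is the hypercontractive KKL-type bound, obtained by applying Bonami's inequality to the boolean derivative $D_kf\in\{-1,0,1\}$ together with $\|D_kf\|_p^p=I_k(f)$ for all $p\ge 1$:
\[
  \sum_{T\ni k}\rho^{|T|-1}\hat f(T)^2 \;\le\; I_k(f)^{2/(1+\rho)}, \qquad 0<\rho\le 1.
\]
A Markov-type application gives $\#\{S\ni k:|S|=d,\ \hat f(S)^2\ge\tau\}\le\rho^{1-d}I_k(f)^{2/(1+\rho)}/\tau$, so the number of heavy level-$d$ Fourier coefficients is controlled by the individual $I_k(f)$ rather than by $n$. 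For each level I would split coefficients into heavy ($\hat f(S)^2\ge\tau_d$) and light parts: heavy ones contribute at most $W_d\log(1/\tau_d)$, while light ones are bounded using the elementary inequality $x\log(1/x)\le x^{1-\varepsilon}/\varepsilon$, which converts their entropy contribution into a sub-$\ell^2$ moment and is then handled by H\"older together with the counting bound above.

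The aim is to tune the parameters $\tau_d,\rho_d$ with $d$---$\rho_d$ close to $1$ at low $d$, so that $I_k(f)^{2/(1+\rho)}\approx I_k(f)$ and the $\sum_kI_k(f)\log(1/I_k(f))$ term emerges naturally, and $\rho_d$ smaller at high $d$ to absorb the factor $\rho^{1-d}$---so that summing the per-level bounds gives exactly $c_1I(f)+c_2\sum_kI_k(f)\log(1/I_k(f))$. The principal obstacle, and the reason the abstract stresses \emph{iterative} moment bounds, is the high-degree regime: a single application of the KKL inequality with a fixed $\rho$ loses the factor $\rho^{1-d}$ at level $d$, which is fatal once $d\gtrsim\log(1/I_k)$. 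A cleaner approach would be to iterate, using a bound at level $d$ to drive an estimate at level $d+1$ via a derivative or projection step, so that $\rho_d$ and $\tau_d$ may vary with $d$ without the errors compounding. Balancing these choices so that neither the heavy nor the light contributions dominate at any single level, and recovering $\sum_kI_k\log(1/I_k)$ rather than the weaker $I(f)\log n$, is where I expect the technical heart of the argument to lie.
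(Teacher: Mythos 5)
Your route is genuinely different from the paper's, but it is a plan rather than a proof, and the plan has a gap exactly where the technical heart should be. The paper never touches hypercontractivity or a level-by-level decomposition of $Ent(f)$. Instead it defines the restricted moment $M_{V,\epsilon}(f)=\mathbb{E}_x\sum_{S\subseteq V}|\widehat{f_{V^c\to x}}(S)|^{2(1+\epsilon)}$, proves a one-coordinate telescoping inequality
\begin{equation*}
M_{V\cup\{k\},\epsilon}(f)-M_{V,\epsilon}(f)\ \geq\ -I_k(f)\bigl(3\epsilon+2\epsilon^2+\bigl(\tfrac{I_k(f)}{4}\bigr)^{-\epsilon}-1\bigr)
\end{equation*}
using only an elementary binomial-expansion inequality and one application of H\"older, then sums over $k$ and differentiates at $\epsilon=0$ to get $Ent(f)\leq\frac{1}{\log 2}\bigl(3I(f)+\sum_k I_k\log\frac{4}{I_k}\bigr)$. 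This is ``iterative'' in coordinates, not in Fourier degree, and the $\sum_k I_k\log\frac{1}{I_k}$ arises because the per-coordinate error $I_k\bigl((I_k/4)^{-\epsilon}-1\bigr)$ has $\epsilon$-derivative $I_k\log\frac{4}{I_k}$ at $\epsilon=0$.

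The gap in your sketch is the step you yourself flag: it is not shown that the heavy/light split with level-dependent $(\rho_d,\tau_d)$ actually closes to $O\bigl(I(f)+\sum_k I_k\log\frac{1}{I_k}\bigr)$. Two concrete problems. First, the parameter tuning you describe runs the wrong way: the KKL factor $\rho^{1-d}=(1/\rho)^{d-1}$ blows up when $\rho$ is small, so to absorb it at high $d$ you need $\rho_d$ \emph{close to} $1$ (e.g.\ $\rho_d=1-O(1/d)$), not ``smaller.'' Second, and more importantly, you do not exhibit the mechanism by which $\sum_k I_k\log\frac{1}{I_k}$ appears after summing over levels; the quantity $\sum_k I_k^{2/(1+\rho_d)}$ is \emph{smaller} than $\sum_k I_k$ for every $\rho_d\in(0,1)$, so it makes the counting bound tighter but does not by itself produce a logarithmic gain per coordinate, and your ``iterate from level $d$ to $d+1$'' remedy for the $\rho^{1-d}$ loss is only a slogan. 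Your decomposition $Ent(f)=\sum_d W_d\log\frac{1}{W_d}+\sum_d W_d H_d^{\ast}$ and the $O(\log(2+I(f)))$ envelope bound are fine, but without a completed estimate on $\sum_d W_d H_d^{\ast}$ there is no theorem. If you want to pursue the hypercontractive route, you would have to verify carefully that the high-degree tail can be paid for out of $I(f)$ alone; the paper's elementary restriction argument is designed precisely to sidestep this difficulty, and the paper itself notes that controlling the high-degree spectrum via hypercontractivity is the known obstruction.
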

Of course, this theorem naturally leads to some results on the FMEI conjecture and concentration of the Fourier spectrum. More precisely, we have that the min-entropy is at most $O(I(f)+\sum\limits_{k\in[n]}I_k(f)\log \frac{1}{I_k(f)})$ and the Fourier spectrum is concentrated on at most $O(I(f)+ \sum\limits_{k\in[n]}I_k(f)\log \frac{1}{I_k(f)})$ coefficients except for a negligible weight. 

We now give some remarks on the term $\sum\limits_{k\in[n]}I_k(f)\log \frac{1}{I_k(f)}$. There's no direct relationship between it and $I(f)$, $I(f)\log I(f)$ (can be much smaller or larger). Here $I(f)$ is the bound for FEI conjecture and $I(f)\log I(f)$ is basically the bound in \cite{1}. In fact, for a series of boolean functions, $\sum\limits_{k\in[n]}I_k(f)\log \frac{1}{I_k(f)}$ could only be relatively large compared to $I(f)$ if a significant portion of weight of $I_k(f)$ decrease rapidly and simultaneously so that $\log \frac{1}{I_k(f)}$ would be non-negligible. Like for Tribes function (see e.g. Section 4.2 in \cite{2}), we have that $\sum\limits_{k\in[n]}I_k(f)\log \frac{1}{I_k(f)}= O(I(f)^2)$. On the other hand, we also give below a non-trivial example of a family of boolean functions where $\sum\limits_{k\in[n]}I_k(f)\log \frac{1}{I_k(f)}=O(I(f))=o(I(f)\log I(f))$. Note that in the simplest case where $I_k(f)\geq c$ for all $k\in [n]$ so that $\sum\limits_{k\in[n]}I_k(f)\log \frac{1}{I_k(f)}=O(I(f))$, the FEI conjecture would be trivial in some sense. 
\begin{exmp}[Is the label of first even group even?]
Given $s\in \mathbb{N}^*$, for all $t\in \mathbb{N}^*$, let us define the boolean function $f_t:\{-1,1\}^{st}\to \{-1,1\}$. For any $x=(x_1,\dots,x_{st})\in \{-1,1\}^{st}$ and $p\in\{1,\dots,t\}$, let $u(p):=\prod\limits_{i=1}^{s} x_{(p-1)s+i}$ and $p_0:=\min(\{p:u(p)=1\} \cup \{n\})$. We define $f_t$ such that 
\begin{equation}
f_t(x):= 1_{p_0/2 \in \mathbb {Z}}- 1_{p_0/2 \notin \mathbb {Z}}
\end{equation}
For this class of boolean functions and $p\in\{1,\dots,t\}$, $i\in \{1,\dots,s\}$, we have that $|I_{(p-1)s+i}(f_t)-\frac{2^{2-p}}{3}|\leq \frac{1}{2^{t-1}}$ (coupling $x$ with an infinite series take value in $\{-1,1\}$ and $f_t$ with an infinite-dimensional boolean function will help us see this quickly), thus
$$I_{(p-1)s+i}(f_t) \to \frac{2^{2-p}}{3},$$ $$I(f_t)\to \frac{4}{3} s$$  and 
$$\sum_{k\in[n]}I_k(f_t)\log\frac{1}{I_k(f_t)} \to \frac{4}{3}(2-\log\frac{4}{3}) s$$
when $t \to \infty$. 
\end{exmp}

Another observation is that when $n$ and $I(f)$ are given, we have
$$\sum\limits_{k\in [n]} I_k(f)\log \frac{1}{I_k(f)}\in [0, I(f)\log\frac{n}{I(f)}].$$
In fact, this comes straightforwardly from Jensen's Inequality since $\theta \to \theta \log \frac{1}{\theta}$ is a concave function. From this we know that for symmetric functions it takes the upper-bound, which is usually much larger than $I(f)\log I(f)$ for small $I(f)$ and on the other hand for $I(f)>\sqrt{n}$ we have $\sum\limits_{k\in [n]} I_k(f)\log \frac{1}{I_k(f)} < I(f)\log I(f)$. We also give two classes of examples where $\sum\limits_{k\in[n]} I_k(f)\log \frac{1}{I_k(f)} = 0$ and $\sum\limits_{k\in[n]} I_k(f)\log \frac{1}{I_k(f)} = I(f)\log\frac{n}{I(f)}$.
\begin{exmp}
For any $s, n \in \mathbb{N}^*$, $s\leq n$, we have a boolean function $f:\{-1, 1\}^n \to \{-1, 1\}$ defined by $f(x):=\prod\limits_{k=1}^{s}x_k$. One may check that $I(f)=s$ and $\sum\limits_{k\in[n]} I_k(f)\log \frac{1}{I_k(f)} = 0$.
\end{exmp}
\begin{exmp}
For any $s, t \in \mathbb{N}^*$, we have a boolean function $f:\{-1, 1\}^{st} \to \{-1, 1\}$ defined by $f(x):=\prod\limits_{p=1}^{t}\min\{x_k:(p-1)s< k\leq ps\}$. One may check that $I_k(f)=2^{1-s}$ and $\sum\limits_{k\in[n]} I_k(f)\log \frac{1}{I_k(f)} = I(f)\log\frac{n}{I(f)}$.
\end{exmp}

This paper is organized simply. Notations and preliminary lemmas are given in Section 2, and after preliminaries, we prove Theorem \ref{T1} in Section 3. 

\paragraph{Note added.} \textit{Similar results are discussed in \cite{22}, which is related to this work and was not cited in the original version.}
\section{Preliminaries}
 We first give the definition of restricted boolean functions.
\begin{defn}
Given a boolean function $f:\{-1,1\}^n\to \{-1,1\}$ and $J\subseteq [n]$, $x\in \{-1,1\}^n$, we define the restricted boolean function $f_{J^c \to x}:\{-1,1\}^J \to \{-1,1\}$ such that for any $y\in \{-1,1\}^J$, $f_{J^c \to x}(y)=f(z)$, where $z\in \{-1,1\}^n$ is such that $z_i=1_{i\in J}\cdot y_i+1_{i\notin J}\cdot x_i$ for any $i\in [n]$.
\end{defn}
As before, we assume that $x$ follows the uniform law on $\{-1,1\}^n$ if we consider it as a random vector in the probability space $(\Omega, \mathcal{F}, P)$. (Under such assumption, $f_{J^c \to x}$ are also called random restrictions in \cite{1}.)

The following lemma about the restricted boolean function will be useful.
\begin{lem}
\label{L22}
For any boolean function $f:\{-1,1\}^n \to \{-1,1\}$ and $k$, $J$ such that $k\in J \subseteq [n]$, we have
\begin{equation}
\mathbb{E}_x \sum_{k\in S\subseteq J} \widehat{f_{J^c \to x}}(S)^2 =I_k(f).
\end{equation}
\end{lem}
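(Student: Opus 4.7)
The plan is to recognize the left-hand side as $\mathbb{E}_x I_k(f_{J^c \to x})$, the expected $k$-th influence of a random restriction, and then identify this average with $I_k(f)$ through the elementary probabilistic definition of influence.

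First, I would invoke the standard Fourier identity $I_k(g) = \sum_{k \in S \subseteq J} \hat{g}(S)^2$ valid for any boolean function $g:\{-1,1\}^J \to \{-1,1\}$ and $k \in J$. This is immediate from the fact that the discrete derivative $D_k g(y) := (g(y) - g(\mu_k(y)))/2$ has Fourier expansion supported on sets containing $k$, combined with Parseval; and since $g$ is $\{-1,1\}$-valued, $(D_k g)^2$ is exactly the indicator that flipping the $k$-th coordinate changes $g$. Applied pointwise (in $x$) to $g = f_{J^c \to x}$, this gives
\begin{equation*}
\sum_{k \in S \subseteq J} \widehat{f_{J^c \to x}}(S)^2 \;=\; I_k(f_{J^c \to x}) \;=\; \mathbb{P}_y\bigl[f_{J^c \to x}(y) \neq f_{J^c \to x}(\mu_k(y))\bigr].
\end{equation*}

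Second, I would take the expectation in $x$. Since $f_{J^c \to x}$ depends on $x$ only through the coordinates $x_{J^c}$, and since the pair $(x_{J^c}, y)$ with $x_{J^c}$ uniform on $\{-1,1\}^{J^c}$ independent of $y$ uniform on $\{-1,1\}^J$ assembles into a uniformly distributed $z \in \{-1,1\}^n$ with $f_{J^c \to x}(y) = f(z)$ and $f_{J^c \to x}(\mu_k(y)) = f(\mu_k(z))$, Fubini collapses the double expectation into the unconditional event defining $I_k(f)$:
\begin{equation*}
\mathbb{E}_x \sum_{k \in S \subseteq J} \widehat{f_{J^c \to x}}(S)^2 \;=\; \mathbb{P}_z\bigl[f(z) \neq f(\mu_k(z))\bigr] \;=\; I_k(f).
\end{equation*}

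There is no genuine obstacle beyond bookkeeping: the only thing to verify carefully is that the coordinates of $x$ lying inside $J$ are irrelevant (they are integrated out trivially because $f_{J^c \to x}$ ignores them), which is transparent from the definition of the restricted function. Alternatively, one could give a purely Fourier-analytic proof by expanding $f = \sum_{T \subseteq [n]} \hat{f}(T) X_T$, computing $\widehat{f_{J^c \to x}}(S) = \sum_{T : T \cap J = S} \hat{f}(T) X_{T \setminus S}(x)$ for $S \subseteq J$, squaring, and using the orthogonality $\mathbb{E}_x X_{T \setminus S}(x) X_{T' \setminus S}(x) = \mathbf{1}_{T = T'}$; summing over $k \in S \subseteq J$ then recovers $\sum_{k \in T \subseteq [n]} \hat{f}(T)^2 = I_k(f)$. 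Either route is short, so I would present the probabilistic one for clarity.
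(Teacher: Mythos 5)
Your proof is correct and follows essentially the same route as the paper: recognize the inner sum as $I_k(f_{J^c\to x})$ via the standard Fourier formula for influence, then average over the restriction and collapse the double expectation into the probability defining $I_k(f)$. The paper is slightly terser (it does not spell out the Fubini step or the orthogonality-based alternative), but the argument is identical in substance.
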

\begin{proof}    
Note that $\sum_{k \in S\subseteq J} \widehat{f_{J^c \to x}}(S)^2=I_k(f_{J^c \to x})$. Let $x'$ be a uniformly distributed random vector  on $\{-1,1\}^J$ in $(\Omega, \mathcal{F}, P)$ independent of $x$, we have that
\begin{equation}
\begin{aligned}
\mathbb{E}_x I_k(f_{J^c \to x}) &=\mathbb{E}_x \mathbb{P}_{x'}[f_{J^c \to x}(x')\neq f_{J^c \to x}(\mu_k(x'))]\\
&=\mathbb{P}_x[f(x)\neq f(\mu_k(x))]\\
&=I_k(f).
\end{aligned}
\end{equation}
\end{proof}

Next, we give the definition for the moment of restricted Fourier coefficients.
\begin{defn}
Given a boolean function $f:\{-1,1\}^n\to \{-1,1\}$, for any $V \subseteq [n]$, $\epsilon \in [0, \frac{1}{2})$, we define the $\epsilon$-moment of $V^c$-restricted Fourier coefficients for $f$ as
\begin{equation}
M_{V,\epsilon}(f):=\mathbb{E}_{x}\sum_{S\subseteq V}{|\widehat{f_{V^{c} \to x}}(S)|^{2(1+\epsilon)}}.
\end{equation} 
\end{defn}
Note that here for $V=\emptyset$, one could generalize boolean functions to the 0-dimensional case and get that $M_{\emptyset, \epsilon}=\mathbb{E}_x |f(x)|^{2(1+\epsilon)}=1$. Furthermore, for any $V$ and $\epsilon$ we have $M_{V,0}(f)=1$ and $M_{[n],\epsilon}(f)=\sum\limits_{S\in[n]} |\hat{f}(S)|^{2(1+\epsilon)}$ directly from the definition. 

At last, we present a lemma which will be essential to the proof of Theorem \ref{T1}.
\begin{lem}
\label{L24}
For $0 \leq a \leq b \leq 1$, $\epsilon \in (0,\frac{1}{2})$, we have
\begin{equation}
\frac{(\sqrt{b}+\sqrt{a})^{2(1+\epsilon)}+(\sqrt{b}-\sqrt{a})^{2(1+\epsilon)}}{2}-a^{1+\epsilon}-b^{1+\epsilon} \leq (3\epsilon +2\epsilon^2)a + (b^\epsilon-a^\epsilon)a.
\end{equation}
\end{lem}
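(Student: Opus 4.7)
The plan is to expand $(\sqrt{b}\pm\sqrt{a})^{2(1+\epsilon)}$ via the generalized binomial series. Set $u=\sqrt{a}$ and $v=\sqrt{b}$. If $v=0$ then $a=b=0$ and the inequality is trivial, so I may assume $v>0$. Since $|u/v|\leq 1$ and the exponent $2(1+\epsilon)$ is positive, the binomial series converges absolutely; averaging the $+$ and $-$ versions kills the odd-degree terms and yields
\[
\frac{(v+u)^{2(1+\epsilon)}+(v-u)^{2(1+\epsilon)}}{2}=\sum_{k=0}^{\infty}\binom{2(1+\epsilon)}{2k}v^{2(1+\epsilon)-2k}u^{2k}.
\]

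The core step will be a sign count on the coefficients $\binom{2(1+\epsilon)}{2k}$. Writing out the falling factorial,
\[
\binom{2(1+\epsilon)}{2k}=\frac{(2+2\epsilon)(1+2\epsilon)(2\epsilon)\prod_{j=1}^{2k-3}(2\epsilon-j)}{(2k)!},
\]
one sees that for $\epsilon\in(0,\tfrac{1}{2})$ the first three factors are strictly positive while each of the remaining $2k-3$ factors is strictly negative, so $\binom{2(1+\epsilon)}{2k}$ carries sign $(-1)^{2k-3}=-1$ for every $k\geq 2$. Because $u,v\geq 0$, the entire tail $\sum_{k\geq 2}$ is then non-positive, and truncating the series after the $k=1$ term gives
\[
\frac{(v+u)^{2(1+\epsilon)}+(v-u)^{2(1+\epsilon)}}{2}\leq v^{2(1+\epsilon)}+\binom{2(1+\epsilon)}{2}v^{2\epsilon}u^{2}.
\]

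To finish, I would subtract $a^{1+\epsilon}+b^{1+\epsilon}=u^{2(1+\epsilon)}+v^{2(1+\epsilon)}$ from both sides and use $\binom{2(1+\epsilon)}{2}=(1+\epsilon)(1+2\epsilon)=1+3\epsilon+2\epsilon^{2}$, which bounds the target left-hand side by $(1+3\epsilon+2\epsilon^{2})b^{\epsilon}a-a^{1+\epsilon}$. The rearrangement
\[
(1+3\epsilon+2\epsilon^{2})b^{\epsilon}a-a\cdot a^{\epsilon}=(3\epsilon+2\epsilon^{2})b^{\epsilon}a+(b^{\epsilon}-a^{\epsilon})a,
\]
combined with $b^{\epsilon}\leq 1$ (from $b\leq 1$ and $\epsilon>0$), yields exactly the stated right-hand side. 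The only genuinely delicate point is the parity count on the binomial coefficients; the remaining algebra and the absolute convergence of the series are routine.
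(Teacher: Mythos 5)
Your proof is correct and takes essentially the same approach as the paper: expand by the generalized binomial series, observe that $\binom{2+2\epsilon}{2m}<0$ for $m\geq 2$ so the tail may be dropped, keep only the constant and quadratic terms, and then use $b^\epsilon\leq 1$ to massage the result into the stated form. The only difference is that you spell out the sign count on the binomial coefficients and the $b=0$ edge case, which the paper leaves implicit.
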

\begin{proof}
Note that $\binom{2+2\epsilon}{2m}<0$ for $m\in \mathbb{N}$, $m\geq2$. By a binomial expansion we have
\begin{equation}
\begin{aligned}
\frac{(\sqrt{b}+\sqrt{a})^{2(1+\epsilon)}+(\sqrt{b}-\sqrt{a})^{2(1+\epsilon)}}{2} &\leq b^{1+\epsilon}+ \binom{2+2\epsilon}{2}b^{\epsilon}a\\
&= b^{1+\epsilon}+ (1+3\epsilon +2\epsilon^2)b^{\epsilon}a\\
&\leq b^{1+\epsilon}+(3\epsilon +2\epsilon^2)a+b^{\epsilon}a.
\end{aligned}
\end{equation}
\end{proof} 
\begin{rem}
\emph{
We also give a lower bound. Note that take $a=b=1$, from the binomial expansion in this lemma we have 
$$2^{2(1+\epsilon)-1}= 1+\sum\limits_{m=1}^{\infty}\binom{2+2\epsilon}{2m},$$
so that 
\begin{equation}
\begin{aligned}
\frac{(\sqrt{b}+\sqrt{a})^{2(1+\epsilon)}+(\sqrt{b}-\sqrt{a})^{2(1+\epsilon)}}{2} &\geq b^{1+\epsilon}+ b^{\epsilon}a\sum\limits_{m=1}^{\infty}\binom{2+2\epsilon}{2m}\\
&= b^{1+\epsilon}+(2\cdot 4^\epsilon -1)b^{\epsilon}a\\
&\geq b^{1+\epsilon}+b^{\epsilon}a.
\end{aligned}
\end{equation}
Thus we have 
\begin{equation}
\label{2.7}
\frac{(\sqrt{b}+\sqrt{a})^{2(1+\epsilon)}+(\sqrt{b}-\sqrt{a})^{2(1+\epsilon)}}{2}-a^{1+\epsilon}-b^{1+\epsilon} \geq (b^\epsilon-a^\epsilon)a.
\end{equation}
}
\end{rem}
\section{Proof of Theorem \ref{T1}}
In this section, we focus on the proof of Theorem \ref{T1}. The key is the following lemma on the moments of restricted Fourier coefficients.
\begin{lem}
\label{L31}
Given a boolean function $f:\{-1,1\}^n \to \{-1,1\}$, for any $V_1 \subset [n]$, $k \in [n]\setminus V_1$, $\epsilon \in (0, \frac{1}{2})$, if we write $V_2=V_1 \cup \{k\}$, we have
\begin{equation}
M_{V_2,\epsilon}(f)-M_{V_1,\epsilon}(f) \geq -I_k(f)(3\epsilon +2\epsilon^2+ (\frac{I_k(f)}{4})^{-\epsilon}-1).
\end{equation}
\end{lem}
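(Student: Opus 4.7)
The plan is to decompose $M_{V_2,\epsilon}(f) - M_{V_1,\epsilon}(f)$ by expanding on the added coordinate $k$, apply Lemma~\ref{L24} termwise, and control the two resulting residual sums with the influence identity from Lemma~\ref{L22} and an elementary logarithmic estimate. First, I would fix $\tilde x \in \{-1,1\}^{V_2^c}$ and set $h = f_{V_2^c \to \tilde x}$. For each $T \subseteq V_1$, write $\alpha_T = \widehat h(T)$, $\beta_T = \widehat h(T \cup \{k\})$, and let $A_T = \min(\alpha_T^2,\beta_T^2)$, $B_T = \max(\alpha_T^2,\beta_T^2)$. Splitting $h$ by $x_k = \pm 1$ produces two restrictions whose Fourier coefficients at $T$ are $\alpha_T \pm \beta_T$; combining the observation $\{|\alpha_T+\beta_T|,|\alpha_T-\beta_T|\} = \{\sqrt{B_T}+\sqrt{A_T},\sqrt{B_T}-\sqrt{A_T}\}$ with averaging over $x_k$ gives
\begin{equation*}
M_{V_2,\epsilon}(f) - M_{V_1,\epsilon}(f) = \mathbb{E}_{\tilde x}\sum_T \left[A_T^{1+\epsilon} + B_T^{1+\epsilon} - P_T\right],
\end{equation*}
where $P_T := \tfrac{1}{2}\left[(\sqrt{B_T}+\sqrt{A_T})^{2(1+\epsilon)} + (\sqrt{B_T}-\sqrt{A_T})^{2(1+\epsilon)}\right]$ is exactly the expression Lemma~\ref{L24} controls. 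Applying Lemma~\ref{L24} termwise reduces the task to showing
\begin{equation*}
(3\epsilon+2\epsilon^2)\,\mathbb{E}_{\tilde x}\sum_T A_T + \mathbb{E}_{\tilde x}\sum_T(B_T^\epsilon-A_T^\epsilon)A_T \leq I_k(f)\left[(3\epsilon+2\epsilon^2) + (I_k(f)/4)^{-\epsilon} - 1\right].
\end{equation*}

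The first summand is immediate: $A_T \leq \beta_T^2$ together with Lemma~\ref{L22} gives $\mathbb{E}_{\tilde x}\sum_T A_T \leq I_k(f)$. For the second, I would use that $h$ is boolean, so Parseval forces $\sum_T(A_T+B_T) = 1$ and in particular $B_T \leq 1$. The elementary inequality $1-e^{-y} \leq y$ applied with $y = \epsilon\log(B_T/A_T) \geq 0$ then yields $(B_T^\epsilon-A_T^\epsilon)A_T \leq B_T^\epsilon \cdot \epsilon A_T\log(B_T/A_T) \leq \epsilon A_T\log(B_T/A_T)$. Setting $S = \sum_T A_T$ (note $S \leq 1/2$ since $A_T \leq B_T$ pointwise and the pair sums to $1$) and invoking the log-sum inequality with $\sum_T B_T = 1-S$ gives $\sum_T A_T\log(B_T/A_T) \leq S\log((1-S)/S) \leq S\log(1/S)$.

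It then remains to show $\mathbb{E}_{\tilde x}[S\log(1/S)] \leq I_k(f)\log(4/I_k(f))$, after which $\epsilon I_k\log(4/I_k) \leq I_k((4/I_k)^\epsilon-1) = I_k((I_k/4)^{-\epsilon}-1)$ follows from $e^y \geq 1+y$. Jensen's inequality for the concave map $x \mapsto x\log(1/x)$ combined with $\mathbb{E}_{\tilde x}S \leq I_k$ bounds the left-hand side by $(\mathbb{E}_{\tilde x} S)\log(1/\mathbb{E}_{\tilde x}S)$. A brief case split closes the argument: when $I_k \leq 1/e$, monotonicity of $x\log(1/x)$ on $[0,1/e]$ gives $\leq I_k\log(1/I_k) \leq I_k\log(4/I_k)$; when $I_k > 1/e$, the universal bound $x\log(1/x) \leq 1/e$ is dominated by $I_k\log(4/I_k) \geq (1/e)\log(4e) > 1/e$, using that $I\mapsto I\log(4/I)$ is increasing on $[1/e,1]$.

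I expect this last step to be the main delicate point: the Jensen/log-sum route naturally yields $I_k\log(1/I_k)$, and one needs the extra $\log 4$ slack precisely to absorb the universal peak value $1/e$ of $x\log(1/x)$ in the regime where $I_k$ is of order one. The constant $4$ appearing in the statement provides exactly this slack, and combining all pieces yields the claimed inequality.
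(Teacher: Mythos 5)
Your proof is correct and follows the paper's strategy for the opening decomposition (splitting on coordinate $k$, reducing to $A_T,B_T$, applying Lemma~\ref{L24} termwise, and bounding $\mathbb{E}\sum_T A_T \leq I_k$ via Lemma~\ref{L22}), but you handle the residual term $\mathbb{E}\sum_T(B_T^\epsilon - A_T^\epsilon)A_T$ by a genuinely different route. The paper bounds this sum by an application of H\"older's inequality with exponents $\frac{1}{\epsilon}$ and $\frac{1}{1-\epsilon}$: after noting $\mathbb{E}\sum_T B_T \leq 1$ and $\frac{1}{1-\epsilon}\geq 1+\epsilon$, it obtains $\mathbb{E}\sum_T B_T^\epsilon A_T \leq A^{1-\epsilon}$ with $A := \mathbb{E}\sum_T A_T^{1+\epsilon}$, hence a bound of the form $A^{1-\epsilon}-A$, which it then maximizes as a function of $A$ on $[0,I_k]$ (the critical point $(1-\epsilon)^{1/\epsilon}\geq\frac14\geq\frac{I_k}{4}$ justifies the final replacement by $I_k((I_k/4)^{-\epsilon}-1)$). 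You instead linearize in $\epsilon$ via $1-e^{-y}\leq y$ to reach $\epsilon A_T\log(B_T/A_T)$, aggregate by the log-sum inequality to get $\epsilon\,S\log(1/S)$ with $S=\sum_T A_T$, move the expectation inside by concavity of $x\mapsto x\log(1/x)$, and close with a case split on $I_k\lessgtr 1/e$ plus $e^y\geq 1+y$ to recover the $(I_k/4)^{-\epsilon}-1$ form. Both are valid; your version makes the source of the $\log(1/I_k)$ factor more transparent (it is literally a relative-entropy-type quantity controlled by Jensen), and it exposes more clearly why the constant $4$ is needed (to absorb the peak $1/e$ of $x\log(1/x)$ when $I_k$ is of constant order). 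The paper's H\"older route is terser and produces the exact algebraic form $I_k((I_k/4)^{-\epsilon}-1)$ in one step without a case split, which is slightly cleaner for the subsequent differentiation at $\epsilon=0$. Your argument is complete as stated; the only point worth making explicit is that the degenerate cases $A_T=0$ or $S=0$ are handled by the usual convention $0\log(1/0)=0$, which you implicitly use.
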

Before proving this lemma, we first give the proof of Theorem \ref{T1} by Lemma \ref{L31}.
\begin{proof}[Proof of Theorem \ref{T1}]
Take $V_1=[k-1]$ and $V_2=[k]$ where $k=1,2,\dots,n$ in Lemma \ref{L31}, we have for any $k\in[n]$ and $\epsilon \in (0,\frac{1}{2})$,
\begin{equation}
M_{[k],\epsilon}(f)-M_{[k-1],\epsilon}(f) \geq -I_k(f)(3\epsilon +2\epsilon^2+ (\frac{I_k(f)}{4})^{-\epsilon}-1).
\end{equation}
Adding all the inequalities for each $k\in [n]$ together and noting that $M_{\emptyset,\epsilon}(f)=\mathbb{E}_x |f(x)|^{2(1+\epsilon)}=1$, we have for any $\epsilon \in (0,\frac{1}{2})$,
\begin{equation}
M_{[n],\epsilon}(f) \geq 1-(3\epsilon +2\epsilon^2)I(f)- \sum_{k=1}^n ((\frac{I_k(f)}{4})^{-\epsilon}-1)I_k(f).
\end{equation}
We also have $M_{[n], 0}(f)=1$ and $M_{[n],\epsilon}(f)=\sum\limits_{S\in[n]} |\hat{f}(S)|^{2(1+\epsilon)}$, thus if we see $M_{[n],\epsilon}(f)$ as a function of $\epsilon$ we have
\begin{equation}
\begin{aligned}
Ent(f) &= \sum_{S\subset [n]} \hat{f}(S)^2 \log_2 \frac{1}{\hat{f}(S)^2} \\
&= -\frac{1}{\log2} \frac{dM_{[n],\epsilon}(f)}{d\epsilon}\big|_{\epsilon=0} \\
&\leq \frac{1}{\log2}\frac{d((3\epsilon +2\epsilon^2)I(f)+ \sum_{k=1}^n ((\frac{I_k(f)}{4})^{-\epsilon}-1)I_k(f))}{d\epsilon}\big|_{\epsilon=0}\\
&= \frac{1}{\log2}(3I(f)+\sum_{k=1}^n{I_k(f) \log\frac{4}{I_k(f)}}).
\end{aligned}
\end{equation}
Here we also assume $I_k(f) \log\frac{4}{I_k(f)}=0$ when $I_k(f)=0$. This inequality implies Theorem \ref{T1}.
\end{proof}
\begin{rem}
\emph{Note that for $V_1=\{1\}$, the Fourier coefficients of $f_{V_1^{c} \to x}$ take value from $\{-1, 0, 1\}$, so we actually have $M_{V_1,\epsilon}(f)=\mathbb{E}_x\sum_{S\subseteq V_1}{|\widehat{f_{V_1^{c} \to x}}(S)|^{2(1+\epsilon)}}=M_{\{1\},0}(f)$. Using this, we will get a slightly stronger result: 
$$Ent(f)\leq O(I(f)+\sum_{k=2}^n I_k(f)\log \frac{1}{I_k(f)}).$$}
\end{rem}
Now it remains to prove Lemma \ref{L31}.
\begin{proof}[Proof of Lemma \ref{L31}]
In this proof, we sometimes write $I_k$ which means $I_k(f)$ for short. Note that we have 
$$M_{V_1,\epsilon}(f)=\mathbb{E}_x\sum_{S\subseteq V_1}{|\widehat{f_{V_1^{c} \to x}}(S)|^{2(1+\epsilon)}}=\mathbb{E}_x\sum_{S\subseteq V_1}{|\widehat{f_{V_1^{c} \to \mu_k(x)}}(S)|^{2(1+\epsilon)}},$$ so that 
\begin{equation}
M_{V_1,\epsilon}(f)=\mathbb{E}_x \sum_{S\subseteq V_1}{\frac{1}{2}(|\widehat{f_{V_1^{c} \to x}}(S)|^{2(1+\epsilon)}+|\widehat{f_{V_1^{c} \to \mu_k(x)}}(S)|^{2(1+\epsilon)}}).
\end{equation}
Thus
\begin{equation}
\begin{aligned}
M_{V_2,\epsilon}(f)-M_{V_1,\epsilon}(f)= & -\mathbb{E}_x\sum_{S\subseteq V_1}{[\frac{1}{2}(|\widehat{f_{V_1^{c} \to x}}(S)|^{2(1+\epsilon)}+|\widehat{f_{V_1^{c} \to \mu_k(x)}}(S)|^{2(1+\epsilon)}}) \\
&-|\widehat{f_{V_2^{c} \to x}}(S)|^{2(1+\epsilon)}-|\widehat{f_{V_2^{c} \to x}}(S\cup \{k\})|^{2(1+\epsilon)}].
\end{aligned}
\end{equation}
On the other hand, note that for any $y\in \{-1,1\}^{V_1}$,
\begin{equation}
\begin{aligned}    
f_{V_1^{c} \to x}(y)&=f_{V_2^{c} \to x}(y\oplus x_k)\\
&=\sum_{S\subset V_2} \widehat{f_{V_2^{c} \to x}}(S) X_S(y\oplus x_k)\\
&=\sum_{S\subset V_1}(\widehat{f_{V_2^{c} \to x}}(S) X_S(y)+ \widehat{f_{V_2^{c} \to x}}(S\cup \{k\})X_S(y\oplus x_k))\\
&=\sum_{S\subset V_1}(\widehat{f_{V_2^{c} \to x}}(S)+ x_k\widehat{f_{V_2^{c} \to x}}(S\cup \{k\}))X_S(y),
\end{aligned}
\end{equation}
thus we have that 
\begin{equation}
\widehat{f_{V_1^{c} \to x}}(S)=\widehat{f_{V_2^{c} \to x}}(S)+ x_k\widehat{f_{V_2^{c} \to x}}(S\cup \{k\})
\end{equation} 
and 
\begin{equation}
\widehat{f_{V_1^{c} \to \mu_k(x)}}(S)=\widehat{f_{V_2^{c} \to x}}(S) - x_k\widehat{f_{V_2^{c} \to x}}(S\cup \{k\}).
\end{equation}
We conclude that $|\widehat{f_{V_1^{c} \to x}}(S)|$ and $|\widehat{f_{V_1^{c} \to \mu_k(x)}}(S)|$ take value from $|\widehat{f_{V_2^{c} \to x}}(S)+\widehat{f_{V_2^{c} \to x}}(S\cup \{k\})|$ and $|\widehat{f_{V_2^{c} \to x}}(S)-\widehat{f_{V_2^{c} \to x}}(S\cup \{k\})|$ respectively (the order might be changed). If we write 
$$a_{x,S}:=\min\{\widehat{f_{V_2^{c} \to x}}(S)^2,\widehat{f_{V_2^{c} \to x}}(S\cup \{k\})^2\}$$ and $$b_{x,S}:=\max\{\widehat{f_{V_2^{c} \to x}}(S)^2,\widehat{f_{V_2^{c} \to x}}(S\cup \{k\})^2\},$$ we will have $0\leq a_{x,S} \leq b_{x,S} \leq 1$ and
\begin{equation}
\begin{aligned}
 & M_{V_2,\epsilon}(f)-M_{V_1,\epsilon}(f)\\
=& -\mathbb{E}_x\sum_{S\subseteq V_1}{[\frac{1}{2}((\sqrt{b_{x,S}}+\sqrt{a_{x,S}})^{2(1+\epsilon)}+(\sqrt{b_{x,S}}-\sqrt{a_{x,S}})^{2(1+\epsilon)})-a_{x,S}^{1+\epsilon}-b_{x,S}^{1+\epsilon}]}.
\end{aligned}
\end{equation}
Here we can use Lemma \ref{L24} to get that
\begin{equation}
\label{3.8}
M_{V_2,\epsilon}(f)-M_{V_1,\epsilon}(f) \geq -\mathbb{E}_x\sum_{S\subseteq V_1}[(3\epsilon+2\epsilon^2)a_{x,S}+(b_{x,S}^\epsilon-a_{x,S}^\epsilon)a_{x,S}].
\end{equation}
Note that by Lemma \ref{L22} we have 
\begin{equation}
\label{3.9}
\mathbb{E}_x\sum_{S\subseteq V_1}a_{x,S} \leq \mathbb{E}_x\sum_{S\subseteq V_1}\widehat{f_{V_2^{c} \to x}}(S\cup \{k\})^2=I_k,
\end{equation}
so that we only need to bound $\mathbb{E}_x\sum\limits_{S\subseteq V_1}(b_{x,S}^\epsilon-a_{x,S}^\epsilon)a_{x,S}$.

To this purpose,  we use Hölder's inequality on $b_{x,S}^\epsilon$ and $a_{x,S}$ to get that (note that $1/\frac{1}{\epsilon}+1/\frac{1}{1-\epsilon}=1$)
\begin{equation}
\begin{aligned}
\mathbb{E}_x\sum_{S\subseteq V_1}b_{x,S}^\epsilon a_{x,S} & \leq (\mathbb{E}_x\sum_{S\subseteq V_1} b_{x,S})^\epsilon (\mathbb{E}_x\sum_{S\subseteq V_1}a_{x,S}^{\frac{1}{1-\epsilon}})^{1-\epsilon}.
\end{aligned}
\end{equation}
Furthermore, as $\mathbb{E}_x\sum\limits_{S\subseteq V_1} b_{x, S} \leq 1$ and $\frac{1}{1-\epsilon} \geq 1+\epsilon$, we have that $(\mathbb{E}_x\sum\limits_{S\subseteq V_1} b_{x,S})^\epsilon\leq 1$ and $(\mathbb{E}_x\sum\limits_{S\subseteq V_1}a_{x,S}^{\frac{1}{1-\epsilon}})^{1-\epsilon}\leq (\mathbb{E}_x\sum\limits_{S\subseteq V_1}a_{x,S}^{1+\epsilon})^{1-\epsilon}$, thus
\begin{equation}
\mathbb{E}_x\sum_{S\subseteq V_1}b_{x,S}^\epsilon a_{x,S} \leq (\mathbb{E}_x\sum_{S\subseteq V_1}a_{x,S}^{1+\epsilon})^{1-\epsilon}.
\end{equation}
If we write $A:=\mathbb{E}_x\sum\limits_{S\subseteq V_1}a_{x,S}^{1+\epsilon}$, we have $\mathbb{E}_x\sum\limits_{S\subseteq V_1}(b_{x,S}^\epsilon-a_{x,S}^\epsilon)a_{x,S}\leq A^{1-\epsilon}-A$. Note that $A \leq \mathbb{E}_x\sum\limits_{S\subseteq V_1}a_{x,S}\leq I_k$ and $\frac{d(A^{1-\epsilon}-A)}{dA}=0$ if and only if $A=(1-\epsilon)^{\frac{1}{\epsilon}}\geq \frac{1}{4} \geq \frac{1}{4}I_k$, we have that the maximal point of $A^{1-\epsilon}-A$ is in $[\frac{1}{4}I_k, I_k]$. Thus 
\begin{equation}
\begin{aligned}
\mathbb{E}_x\sum_{S\subseteq V_1}(b_{x,S}^\epsilon-a_{x,S}^\epsilon)a_{x,S}&\leq A^{1-\epsilon}-A\\
&= A(A^{-\epsilon} -1)\\
&\leq I_k((\frac{I_k}{4})^{-\epsilon}-1).
\end{aligned}
\end{equation} 
Combining it with (\ref{3.8}) and (\ref{3.9}) completes the proof.
\end{proof}
\begin{rem}
\emph{We remark that the moment of restricted Fourier coefficients $M_{V,\epsilon}(f)$ was also used in e.g. Lemma 5.1 of \cite{1} (in a different way) and it might be worthwhile studying them further.}

\emph{If we go through the whole proof we will see that, to estimate $Ent(f)$ using this approach, we are somehow looking for a bound of
\begin{equation}
\label{3.12}
\frac{d}{d\epsilon}\mathbb{E}_x\sum_{S\subseteq V_1}(b_{x,S}^\epsilon-a_{x,S}^\epsilon)a_{x,S}\big|_{\epsilon=0}=\mathbb{E}_x\sum_{S\subseteq V_1}a_{x,S}\log\frac{b_{x,S}}{a_{x,S}}
\end{equation}
where $a_{x,S}\log\frac{b_{x,S}}{a_{x,S}}$ is assumed to be $0$ when $a_{x,S}$ or $b_{x,S}$ is $0$. Of course, one could use Jensen's Inequality to show that (\ref{3.12}) is not larger than $I_k(f)\log\frac{c}{I_k(f)}$ since $\theta \to \log \theta$ is a concave function, $\mathbb{E}_x\sum\limits_{S\subseteq V_1}a_{x, S}\leq I_k(f)$ and $\mathbb{E}_x\sum\limits_{S\subseteq V_1}b_{x, S}\leq 1$. (\ref{2.7}) tells us that from $Ent(f)$ to (\ref{3.12}) we even only lose a constant factor of the influence.}

\emph{To prove FEI conjecture, we would like to find some $V_1$ and $k$ such that 
\begin{equation}
\mathbb{E}_x\sum_{S\subseteq V_1}a_{x,S}\log\frac{b_{x,S}}{a_{x,S}}\leq O(I_k(f)).
\end{equation} 
This is not always right for non-boolean functions and seems to rely on deeper structural properties of the Fourier spectrum. For example, noting that $$a_{x, S}\log\frac{b_{x, S}}{a_{x, S}}\leq a_{x, S}\sqrt{\frac{b_{x, S}}{a_{x, S}}}=\sqrt{a_{x, S}b_{x, S}}=|\widehat{f_{V_2^{c} \to x}}(S)\widehat{f_{V_2^{c} \to x}}(S\cup \{k\})|,$$ we see that the following inequality (that we can neither prove nor disprove) would lead to FEI conjecture by an inductive argument.}
\begin{quest}
\label{C31}
Does there exist a universal constant $c>0$, such that for any boolean function $f:\{-1,1\}^n \to \{-1,1\}$, there exists $k\in [n]$ such that
\begin{equation}
\sum_{k\notin S} |\hat{f}(S)\hat{f}(S\cup \{k\})| \leq c I_k(f)?
\end{equation}
\end{quest}
\emph{
The `And' function $f(x):=x_1 \wedge x_2 ... \wedge x_n$ implies that the best constant in the above inequality is at least $2$.}
\end{rem}
\raggedright
\sloppy

\end{document}